\documentclass[11pt,reqno]{amsart}

\usepackage[a4paper,margin=1.12in]{geometry}
\usepackage{amsmath,amssymb,amsthm,mathtools}
\usepackage{enumitem}
\usepackage{xcolor}
\usepackage{microtype}
\usepackage{hyperref}
\usepackage[nameinlink,capitalize,noabbrev]{cleveref}

\hypersetup{
  colorlinks=true,
  linkcolor=blue!55!black,
  citecolor=blue!55!black,
  urlcolor=blue!55!black
}

\newtheorem{theorem}{Theorem}[section]
\newtheorem{proposition}[theorem]{Proposition}
\newtheorem{lemma}[theorem]{Lemma}
\newtheorem{corollary}[theorem]{Corollary}

\theoremstyle{definition}

\newtheorem{problem}[theorem]{Problem}
\theoremstyle{remark}
\newtheorem{remark}[theorem]{Remark}
\newtheorem{example}[theorem]{Example}

\newcommand{\C}{\mathbb C}
\newcommand{\R}{\mathbb R}
\newcommand{\Q}{\mathbb Q}

\newcommand{\B}{\mathbb B}

\newcommand{\Jac}{J_{\!\R}}
\newcommand{\Repart}{\operatorname{Re}}

\newcommand{\Crit}{\operatorname{Crit}_{\R}}

\newcommand{\wtH}{\widetilde H}
\newcommand{\m}{\mathfrak m}

\title[An All-Dimensional Lewy Theorem]
{An all-dimensional Lewy theorem for pluriharmonic mappings via Milnor monodromy} 

\author[D. Zhong and Z.-G. Wang]{Deguang Zhong and Zhi-Gang Wang$^*$}

\address{\noindent Deguang Zhong\vskip.05in
Institute of Applied Mathematics, Shenzhen Polytechnic
University, Shenzhen 518055, Guangdong, P.~R. China.}
\email{zhongdg1014$@$szpu.edu.cn}

\address{\noindent Zhi-Gang Wang\vskip.05in
School of Mathematics and Statistics, and Hunan Provincial University Key Laboratory for Big Data Analysis and Application, Hunan First Normal University,
Changsha 410205, Hunan, P. R. China.}
\email{sjyzhigangwang$@$hnfnu.edu.cn}
\thanks{$^*$Corresponding author.}

\subjclass[2020]{Primary 31C10, 32S55; Secondary 30C65, 32U05}
\keywords{Pluriharmonic mapping, quasiconformal mapping, Lewy theorem, Milnor fibration}

\begin{document}

\begin{abstract}
Hengartner asked whether a pluriharmonic mapping between equidimensional
complex Euclidean domains is locally one-to-one precisely at the points where
its real Jacobian is nonzero.  Naser had earlier claimed the conclusion in
complex dimension two, and a recent theorem of Kalaj supplies a complete proof
in that dimension.  We prove the result in every complex dimension.  The main
ingredient is a singularity-theoretic characterization which is of independent
interest: if $q\colon(\C^n,0)\to(\C,0)$ is a nonconstant holomorphic germ, then
$dq(0)\ne0$ if and only if one (equivalently, every) phase hypersurface
\(
   \{\Repart(e^{-i\theta}q)=0\}
\)
is a rational homology $(2n-1)$-manifold at the origin.  The difficult
direction combines local conical structure, Alexander duality, the Milnor
fibration, and A'Campo's vanishing theorem for the Lefschetz number of local
monodromy.  The argument requires neither an isolated critical point nor a
reducedness hypothesis.  It follows that the real critical set of any
pluriharmonic self-dimensional mapping agrees exactly with its local
noninjectivity set.  As a quantitative application, combining our theorem with
Kalaj's boundary regularity theorem shows that, in every complex dimension, a
quasiconformal pluriharmonic homeomorphism of the unit ball onto a bounded
$C^1$-Dini domain is bi-Lipschitz.
\end{abstract}

\maketitle

\section{Introduction and statement of the main result}

Lewy's classical theorem asserts that a one-to-one harmonic mapping between
planar domains has nowhere-vanishing Jacobian \cite{Lewy1936}.  Consequently,
planar harmonic univalence automatically upgrades to local real-analytic
diffeomorphism.  This rigidity is exceptional among general harmonic maps.
Wood constructed a harmonic homeomorphism of $\R^3$ whose Jacobian vanishes
\cite{Wood1991}, and related failures of injectivity for higher-dimensional
harmonic extensions were exhibited by Laugesen \cite{Laugesen1996}.  Positive
higher-dimensional results therefore require additional structure; compare
Lewy's theorem for harmonic gradients \cite{Lewy1968} and later work on
harmonic quasiconformal mappings \cite{AstalaManojlovic2015,
BozinMateljevic2015}.

Pluriharmonic mappings form a natural intermediate class.  A complex-valued
$C^2$ function $f$ on a domain in $\C^n$ is pluriharmonic when
\[
   \frac{\partial^2 f}{\partial z_j\,\partial\overline z_k}=0,
   \quad 1\le j,k\le n.
\]
Equivalently, $f$ is locally of the form $h+\overline g$ with $h$ and $g$
holomorphic.  In particular, every real-valued pluriharmonic function is
locally the real part of a holomorphic function.  We refer to
\cite{GunningRossi1965,Hormander1990,Rudin1980} for the several-variable
background and to \cite{ChenGauthier2011,HamadaKohr2015} for aspects of the
function theory of pluriharmonic mappings.

The hypersurfaces $\{\Repart q=0\}$ also form a basic class of singular
real-analytic Levi-flat hypersurfaces: away from the critical locus of $q$,
their Levi foliation is given by the complex level sets of $q$.  Singular
Levi-flat hypersurfaces have a substantial local theory
\cite{Brunella2007,BurnsGong1999}.  From that viewpoint,
Theorem~\ref{thm:phase-criterion} says that a singularity coming from a holomorphic
first integral is always visible already in local rational homology.

In 1973, Naser \cite{Naser1973} stated a four-real-dimensional pluriharmonic analogue of Lewy's
theorem.  The general question was recorded by Bshouty and
Lyzzaik \cite{BshoutyLyzzaik2010} as Problem~3.10(a), attributed to Hengartner.

\begin{problem}\label{q:hengartner}
{\it Let $f_1,\ldots,f_n$ be complex-valued pluriharmonic functions on the unit
ball $B\subset\C^n$, and put $F=(f_1,\ldots,f_n)$.  Is $F$ locally univalent
if and only if its real Jacobian does not vanish?}
\end{problem}

Kalaj \cite{Kalaj2026} recently identified a local-topological gap in the earlier
four-dimensional argument and proved the result for maps
$\C^2\to\C^2$.  His proof reduces degeneracy to a critical
holomorphic germ $q\colon(\C^2,0)\to(\C,0)$ and then studies the topology of
$\{\Repart q=0\}$ through plane-curve links.  That link analysis is specific
to complex dimension two.

The purpose of this paper is to replace the dimension-specific part by a
general monodromy obstruction.  Our first theorem is stronger than the local
flatness statement required for the mapping problem.  For a locally compact
space $X$ and $x\in X$, we use $H_*(X,X\setminus\{x\};\Q)$ for local homology,
understood on a sufficiently small representative when $X$ is a germ.

\begin{theorem}
\label{thm:phase-criterion}
Let $q\colon(\C^n,0)\to(\C,0)$ be a nonconstant holomorphic germ, $n\ge1$.
For $\theta\in\R$, set
\[
   X_\theta(q):=\{z:\Repart(e^{-i\theta}q(z))=0\}.
\]
The following conditions are equivalent:
\begin{enumerate}[label=\textup{(\roman*)}]
\item $dq(0)\ne0$;
\item for every $\theta\in\R$, the germ $X_\theta(q)$ is a smooth
      real-analytic hypersurface at $0$;
\item for some $\theta\in\R$, the local rational homology of $X_\theta(q)$
      at $0$ is that of $\R^{2n-1}$, namely
\begin{equation}\label{eq:local-Q-homology}
H_j\bigl(X_\theta(q),X_\theta(q)\setminus\{0\};\Q\bigr)
 \cong
 \begin{cases}
   \Q,&j=2n-1,\\
   0,&j\ne2n-1.
 \end{cases}
\end{equation}
\end{enumerate}
Thus, at a holomorphic critical point, no phase hypersurface $X_\theta(q)$ is
even a rational homology manifold of the expected dimension.
\end{theorem}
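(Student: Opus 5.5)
The implications (i)$\Rightarrow$(ii)$\Rightarrow$(iii) are routine. If $dq(0)\ne0$, then for every $\theta$ the real function $\Repart(e^{-i\theta}q)$ has real differential $\Repart(e^{-i\theta}dq(0))\ne0$, because a nonzero complex-linear form has nonzero real part. The implicit function theorem therefore makes $X_\theta(q)$ a smooth real-analytic hypersurface at $0$. A smooth germ of real dimension $2n-1$ has the local homology of $\R^{2n-1}$ in \eqref{eq:local-Q-homology}. All the work lies in (iii)$\Rightarrow$(i). Replacing $q$ by $e^{-i\theta}q$, we may assume $\theta=0$ and write $X=\{\Repart q=0\}$. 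We assume $dq(0)=0$ and aim to show that \eqref{eq:local-Q-homology} fails.

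Step 1 is to pass from local homology to a link. By the conic structure theorem for real-analytic sets, small balls $B_\varepsilon$ meet $X$ in the cone over the link $L=X\cap S_\varepsilon$. Hence $H_j(X,X\setminus0;\Q)\cong \tilde H_{j-1}(L;\Q)$, and \eqref{eq:local-Q-homology} says exactly that $L$ is a rational homology $(2n-2)$-sphere.

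Step 2 applies Alexander duality in $S_\varepsilon\cong S^{2n-1}$. Then $\tilde H_*(S_\varepsilon\setminus L;\Q)$ is determined by $\tilde H^*(L;\Q)$, so $L$ is a $\Q$-homology $(2n-2)$-sphere iff $S_\varepsilon\setminus L$ has the rational homology of two points. Now $S_\varepsilon\setminus L=U_+\sqcup U_-$ with $U_\pm=\{\pm\Repart q>0\}\cap S_\varepsilon$, and both pieces are nonempty since $q$ is a nonconstant open map. So (iii) amounts to each $U_\pm$ being connected and $\Q$-acyclic. Next I identify $U_+$ with a piece of the Milnor fibration. The Milnor fibration $\arg q\colon S_\varepsilon\setminus\{q=0\}\to S^1$ (valid for non-isolated and non-reduced $q$) restricts over the open arc $\{|\arg|<\pi/2\}$. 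There it is trivial, so $U_+\setminus\{q=0\}\simeq F$, the Milnor fiber. One must still handle $U_+$ itself versus its intersection with the complement of $\{q=0\}$; but $\{q=0\}\cap S_\varepsilon\subset L$, so in fact $U_+$ avoids $\{q=0\}$ and $U_+\simeq F\times(-\pi/2,\pi/2)$. Thus (iii) forces $\tilde H_*(F;\Q)=0$.

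Step 3 is the expected main obstacle, namely showing that a $\Q$-acyclic Milnor fiber forces $dq(0)\ne0$. I would invoke A'Campo's theorem: when $dq(0)=0$, the Lefschetz number of the monodromy $h\colon F\to F$ vanishes. But if $F$ is $\Q$-acyclic, then $h_*$ is the identity on $H_0=\Q$ and zero elsewhere, giving $\Lambda(h)=1$, a contradiction. The care needed is that A'Campo's result applies to arbitrary germs with $q(0)=0$ and $dq(0)=0$, with no isolatedness or reducedness assumed; I would cite it in that generality. The case $n=1$ needs separate attention. There $q=uz^k$ with $k\ge2$, $F$ is $k$ points, and $X$ is $2k$ rays, so everything can be checked directly. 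Combining the three steps gives (iii)$\Rightarrow$(i), and hence the final sentence of the theorem as well.
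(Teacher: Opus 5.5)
Your proposal is correct and follows the paper's proof essentially step for step: conic structure reduces (iii) to the link being a rational homology $S^{2n-2}$, Alexander duality makes each sign region $U_\pm$ a connected, rationally acyclic component, $U_+$ is the restriction of the Milnor fibration to an open semicircle, and A'Campo's theorem yields the contradiction $1=\Lambda(h_q)=0$. The differences are cosmetic: the paper gets $U_\pm\neq\emptyset$ from the minimum principle for $\Repart q$ (your openness argument also works once openness is used at interior points of the ball), it explicitly cites Hamm--L\^e to identify the spherical monodromy with the tube monodromy in A'Campo's theorem, and the case $n=1$ needs no separate treatment because the same argument runs in $S^1$.
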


The use of rational rather than integral local homology is deliberate.  It is
the weakest standard manifold-type hypothesis needed by the proof, since the
Lefschetz number is computed on rational homology.  In particular,
Theorem~\ref{thm:phase-criterion} rules out topological-manifold and locally flat
hypersurface germs a fortiori.

Our answer to Problem \ref{q:hengartner} is the following pointwise theorem.

\begin{theorem}
\label{thm:lewy}
Let $\Omega\subset\C^n$ be a domain, $n\ge1$, and let
$F\colon\Omega\to\C^n$ be pluriharmonic.  For every $p\in\Omega$, the
following are equivalent:
\begin{enumerate}[label=\textup{(\roman*)}]
\item $F$ is one-to-one on a neighborhood of $p$;
\item $\Jac F(p)\ne0$;
\item $F$ is a local real-analytic diffeomorphism at $p$.
\end{enumerate}
Consequently, Hengartner's Problem~3.10(a) has an affirmative answer in every
complex dimension.
\end{theorem}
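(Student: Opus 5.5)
The implications (ii)$\Rightarrow$(iii)$\Rightarrow$(i) of Theorem~\ref{thm:lewy} are immediate. A local real-analytic diffeomorphism is injective near $p$, and the inverse function theorem gives (ii)$\Rightarrow$(iii). The real content is (i)$\Rightarrow$(ii), which I will prove by contraposition. Assume $\Jac F(p)=0$ and translate so that $p=0$ and $F(0)=0$. Write $F=h+\overline g$ near $0$ with $h,g\colon(\C^n,0)\to(\C^n,0)$ holomorphic. The real derivative of $F$ at $0$ is the $\R$-linear map $v\mapsto Av+\overline{Bv}$, where $A=h'(0)$ and $B=g'(0)$. Since it is singular, there is a real covector annihilating its image. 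That covector has the form
\[
   \lambda(w)=\Repart\langle w,c\rangle \quad\text{for some } 0\neq c\in\C^n,
\]
where $\langle w,c\rangle=\sum_k w_k\overline{c_k}$.

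Now consider the real-valued pluriharmonic function $u=\lambda\circ F=\Repart\langle h,c\rangle+\Repart\langle \overline g,c\rangle$. Both terms are real parts of holomorphic functions: $\Repart\langle\overline g,c\rangle=\Repart\bigl(\sum_k\overline{g_k}\,\overline{c_k}\bigr)=\Repart\bigl(\sum_k g_k c_k\bigr)$. Hence $u=\Repart q$ with
\[
   q=\sum_k \overline{c_k}\,h_k+\sum_k c_k\, g_k,
\]
which is holomorphic with $q(0)=0$. By the choice of $c$ we have $du(0)=0$. For a holomorphic function, $d(\Repart q)(0)=\Repart(dq(0)\,\cdot)$, and this vanishes as a real covector only if $dq(0)=0$. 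So either $q\equiv0$ or $q$ is a nonconstant germ with $dq(0)=0$.

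\emph{Case $q$ nonconstant.} Theorem~\ref{thm:phase-criterion} with $\theta=0$ shows that $X:=\{u=0\}$ is not a rational homology $(2n-1)$-manifold at $0$. Suppose $F$ were injective on a small ball $U$. By invariance of domain, $F|_U$ is a homeomorphism onto an open set $V\ni0$. It maps $X\cap U$ homeomorphically onto $V\cap\{\lambda=0\}$, which is an open piece of a real hyperplane. Then $X$ would be a topological $(2n-1)$-manifold near $0$, a contradiction. This step is where all the difficulty lies, and it is absorbed entirely by Theorem~\ref{thm:phase-criterion}. The only care needed is that local homology is a homeomorphism invariant of germs, which it is.

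\emph{Case $q\equiv0$.} Then $u\equiv0$ near $0$, so $F(U)\subset\{\lambda=0\}$ has empty interior. This contradicts invariance of domain for an injective continuous map $U\to\R^{2n}$. Hence $F$ is not injective on any neighborhood of $p$, which proves (i)$\Rightarrow$(ii).

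Finally, for the answer to Hengartner's Problem~\ref{q:hengartner}, apply the pointwise equivalence at every point of the ball. The main obstacle is the topological criterion, which is already established. The remaining point to verify carefully is the linear algebra that identifies the annihilating covector as $\Repart\langle\cdot,c\rangle$ and the conjugation bookkeeping that makes $q$ holomorphic.
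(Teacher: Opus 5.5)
Your proposal is correct and follows essentially the same route as the paper. A singular $dF_p$ gives a real covector whose composition with $F$ is $\Repart q$ for a holomorphic germ $q$ with $dq(p)=0$. Invariance of domain then makes $\{\Repart q=0\}$ the $F$-pullback of a hyperplane, so it is a locally flat manifold germ, and Theorem~\ref{thm:phase-criterion} rules this out.

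The differences are cosmetic. You write $q=\sum_k\overline{c_k}\,h_k+\sum_k c_k\,g_k$ directly from $F=h+\overline g$ instead of invoking the holomorphic Poincar\'e lemma (Lemma~\ref{lem:primitive}). You also dispose of the case $q\equiv0$ afterwards, rather than first proving $u$ nonconstant as in Proposition~\ref{prop:reduction}.
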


There are two further consequences worth emphasizing.  First, the theorem
identifies, rather than merely compares, two a priori different singular sets:
\[
 \{\Jac F=0\}
 =\{p:F\text{ is not one-to-one on any neighborhood of }p\}.
\]
This yields a real-analytic branch-set dichotomy; see
Theorem~\ref{thm:branch-identity} and Corollary~\ref{cor:dichotomy}.  Second, Kalaj proved that a harmonic
quasiconformal homeomorphism from a real unit ball onto a bounded
$C^1$-Dini domain is bi-Lipschitz once it is known to be a local
$C^1$-diffeomorphism \cite[Theorem~22]{Kalaj2026}.  The missing local
nondegeneracy is supplied by Theorem~\ref{thm:lewy} in every even real dimension.

\begin{corollary}
\label{cor:bilipschitz-intro}
Let $\Omega\subset\C^n\cong\R^{2n}$ be a bounded domain with $C^1$-Dini
boundary.  Every quasiconformal pluriharmonic homeomorphism
\[
      F\colon\B^n\longrightarrow\Omega
\]
is bi-Lipschitz.  In particular, every quasiconformal pluriharmonic
self-homeomorphism of $\B^n$ is bi-Lipschitz.
\end{corollary}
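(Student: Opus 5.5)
The corollary should follow from Theorem~\ref{thm:lewy} together with Kalaj's boundary regularity theorem \cite[Theorem~22]{Kalaj2026}, once Kalaj's hypotheses are verified for $F$. I will check them one at a time.

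\emph{Step 1: harmonicity.} Writing $z_j=x_j+iy_j$, the Euclidean Laplacian on $\R^{2n}$ is
\[
 \Delta=4\sum_{j=1}^n\frac{\partial^2}{\partial z_j\,\partial\overline z_j}.
\]
This is four times the trace of the complex Hessian. Hence every component of a pluriharmonic map is harmonic in the real sense, and $F\colon\B^n\to\Omega$ is a harmonic map between domains of $\R^{2n}$. It is real-analytic in $\B^n$, in particular $C^1$. Combined with the assumptions, $F$ is a harmonic quasiconformal homeomorphism of the real unit ball of $\R^{2n}$ onto the bounded $C^1$-Dini domain $\Omega$.

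\emph{Step 2: local nondegeneracy.} Since $F$ is a homeomorphism, it is one-to-one on a neighborhood of every $p\in\B^n$. Condition (i) of Theorem~\ref{thm:lewy} therefore holds at every point. The implication (i)$\Rightarrow$(iii) then says $F$ is a local real-analytic diffeomorphism at each point of $\B^n$. As a bijection, $F$ is thus a global real-analytic diffeomorphism of $\B^n$ onto $\Omega$, and in particular a local $C^1$-diffeomorphism. This is exactly the additional hypothesis in \cite[Theorem~22]{Kalaj2026}.

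\emph{Step 3: applying Kalaj.} \cite[Theorem~22]{Kalaj2026} now gives that $F$ is bi-Lipschitz. For the special case $\Omega=\B^n$, the boundary sphere is real-analytic, so it is $C^{1,1}$ and a fortiori $C^1$-Dini. The general statement therefore covers every quasiconformal pluriharmonic self-homeomorphism of $\B^n$.

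The only real risk is in Step 3: the notions in the corollary must match those in Kalaj's theorem. First, Kalaj's quasiconformality condition (whether a distortion bound on $|DF|^{2n}\le K\,J$ or a metric definition) must agree with the one used here. Second, his ``local $C^1$-diffeomorphism'' must be required only in the open ball, not up to the boundary. Third, his notion of $C^1$-Dini boundary must be the one intended here. I would check this against the statement in \cite{Kalaj2026}. If his theorem needs nonvanishing Jacobian rather than local diffeomorphism, that follows from (i)$\Rightarrow$(ii) of Theorem~\ref{thm:lewy} just as well.
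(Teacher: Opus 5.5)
Your proposal is correct and follows the paper's proof. The paper argues the same way: a homeomorphism is locally injective, Theorem~\ref{thm:lewy} makes $F$ a local real-analytic (hence $C^1$) diffeomorphism, the real coordinates of $F$ are harmonic, and then \cite[Theorem~22]{Kalaj2026} applies with $m=2n$. Your added details (the identity $\Delta=4\sum_j\partial_{z_j}\partial_{\overline z_j}$, and the observation that $\partial\B^n$ is $C^1$-Dini for the self-map case) make explicit what the paper leaves implicit.
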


The proof has four independent components.  Degeneracy of $dF_p$ produces a
real scalar projection $u=\ell\circ F$ with $du(p)=0$.  Pluriharmonicity gives
$u=\Repart q$ and $dq(p)=0$.  Local injectivity and invariance of domain force
the level set $\{u=u(p)\}$ to be a locally flat hyperplane pullback.  Finally,
Theorem~\ref{thm:phase-criterion} excludes such a level set: local conical structure
and Alexander duality would make a Milnor fiber rationally acyclic, forcing
the monodromy Lefschetz number to be $1$, whereas A'Campo's theorem makes it
$0$.  The last statement applies without isolatedness or reducedness
assumptions \cite{ACampo1973,GimenezLeNuno2023}.

The paper is organized as follows.  Section~\ref{sec:reduction} gives the scalar
reduction.  Section~\ref{sec:local-topology} records the local-homology and duality
input.  Section~\ref{sec:milnor} carefully matches the spherical Milnor fibration
with the local monodromy used in A'Campo's theorem, including the nonisolated
case.  Section~\ref{sec:criterion} proves the phase-hypersurface criterion, and
Sections~\ref{sec:lewy-proof} and \ref{sec:applications} prove the mapping theorem and its
structural and quantitative consequences.  We stress that the paper addresses
Problem~3.10(a) only; the boundary-extension question in Problem~3.10(b) is a
different problem.
\vskip .10in
\section{Pluriharmonic reduction to a scalar holomorphic germ}
\label{sec:reduction}

We first isolate the only point at which pluriharmonicity enters the proof.

\begin{lemma}
\label{lem:primitive}
Let $U\subset\C^n$ be a ball and let $u\in C^2(U,\R)$ be pluriharmonic.  Then
there is a holomorphic function $q$ on $U$ such that $u=\Repart q$.  The
function $q$ is unique up to an imaginary constant.
\end{lemma}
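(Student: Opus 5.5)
The plan is the standard one: build an imaginary part $v$ for $u$ by integrating a closed real $1$-form on the ball, then check that $q=u+iv$ is holomorphic. The relevant form is $\eta := d^c u$, with the convention
\[
\eta=-\sum_{j=1}^n\Bigl(\frac{\partial u}{\partial y_j}\,dx_j-\frac{\partial u}{\partial x_j}\,dy_j\Bigr)
 = i\sum_{j=1}^n\Bigl(\frac{\partial u}{\partial z_j}\,dz_j-\frac{\partial u}{\partial \overline z_j}\,d\overline z_j\Bigr).
\]
Since $u$ is real, $\overline{\partial u/\partial z_j}=\partial u/\partial\overline z_j$, so $\eta$ is a real $C^1$ one-form.

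First I will show that $\eta$ is closed. Write $\eta=i(\partial u-\overline\partial u)$. Then $d\eta=i(\overline\partial\partial u-\partial\overline\partial u)=-2i\,\partial\overline\partial u$. Pluriharmonicity means precisely $\partial\overline\partial u=\sum_{j,k}u_{z_j\overline z_k}\,dz_j\wedge d\overline z_k=0$. Hence $d\eta=0$. This needs only $u\in C^2$, since the coefficients of $\eta$ are $C^1$ and Stokes/Poincaré apply to $C^1$ closed forms.

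Second, $U$ is a ball, hence star-shaped, so the Poincaré lemma gives $v\in C^2(U,\R)$ with $dv=\eta$. Concretely, take $v(z)=\int_{[a,z]}\eta$ along segments from the center $a$. Matching coefficients of $dx_j$ and $dy_j$ gives
\[
v_{x_j}=-u_{y_j},\qquad v_{y_j}=u_{x_j}.
\]
These are exactly the Cauchy--Riemann equations for $q:=u+iv$ in each variable $z_j=x_j+iy_j$. Since $q\in C^1$ satisfies them, it is holomorphic in each variable separately. A $C^1$ function that is holomorphic in each variable is holomorphic, by the iterated Cauchy formula (Osgood's lemma is more than enough). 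Also $\Repart q=u$.

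Third, uniqueness. If $q_1,q_2$ are holomorphic on $U$ with $\Repart q_1=\Repart q_2$, then $h=q_1-q_2$ is holomorphic with $\Repart h\equiv 0$. By the Cauchy--Riemann equations $dh=0$, and $U$ is connected, so $h$ is a constant $ic$ with $c\in\R$.

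There is no serious obstacle here. The only points needing care are the sign convention in $d^c$, so that the Cauchy--Riemann equations come out with the right orientation, and the observation that simple connectivity (star-shapedness) of the ball is what allows $\eta$ to be integrated globally on $U$.
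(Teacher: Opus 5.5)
Your argument is correct and is essentially the paper's: the paper integrates the closed holomorphic form $2\partial u=du+i\,d^cu$ on the ball to get $q$ directly, while you integrate its imaginary part $d^cu$ to get $v=\operatorname{Im} q$ and then check the Cauchy--Riemann equations, which is the same computation split into real and imaginary parts. One harmless slip: your complex expression for $\eta$ has the wrong sign (it should be $i(\overline\partial u-\partial u)=i\sum_j(u_{\overline z_j}\,d\overline z_j-u_{z_j}\,dz_j)$, as the test case $u=x_1$ shows), but this does not affect the proof, since closedness is insensitive to the sign and you read the Cauchy--Riemann equations off the correct real expression.
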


\begin{proof}
The $(1,0)$-form
\[
       \alpha:=2\partial u
       =2\sum_{j=1}^n u_{z_j}\,dz_j
\]
has holomorphic coefficients because $\overline\partial\partial u=0$, and it
is $\partial$-closed.  The holomorphic Poincar\'e lemma on the ball gives a
holomorphic function $q$ with $dq=\alpha$.  Since
\[
       d(\Repart q)=\Repart(dq)=\Repart(2\partial u)=du,
\]
$u-\Repart q$ is constant; changing the real part of the additive constant in
$q$ gives the identity.  If two holomorphic functions have the same real
part, their difference is a purely imaginary constant.
\end{proof}

\begin{lemma}
\label{lem:real-complex}
Let $q$ be holomorphic near $p\in\C^n$.  Then
\[
       d(\Repart q)_p=0\Longleftrightarrow dq_p=0.
\]
More precisely, in standard coordinates,
\begin{equation}\label{eq:gradient-identity}
   |\nabla \Repart q|^2=\sum_{j=1}^n\left|\frac{\partial q}{\partial z_j}\right|^2.
\end{equation}
\end{lemma}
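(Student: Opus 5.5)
The plan is to compute directly in real coordinates via the Cauchy--Riemann equations. Write $z_j=x_j+iy_j$ and $q=a+ib$ with $a=\Repart q$ and $b=\operatorname{Im}q$. Since $q$ is holomorphic, the Wirtinger derivatives satisfy $\partial q/\partial\overline z_j=0$, and the complex derivative can be written as
\[
   \frac{\partial q}{\partial z_j}=\frac{\partial q}{\partial x_j}=a_{x_j}+i\,b_{x_j}.
\]
The Cauchy--Riemann equations $a_{x_j}=b_{y_j}$ and $a_{y_j}=-b_{x_j}$ then give
\[
   \frac{\partial q}{\partial z_j}=a_{x_j}-i\,a_{y_j}.
\]
Taking moduli yields
\[
   \Bigl|\frac{\partial q}{\partial z_j}\Bigr|^2=a_{x_j}^2+a_{y_j}^2 .
\]
Summing over $j=1,\dots,n$ gives exactly $|\nabla a|^2$, which is \eqref{eq:gradient-identity}.

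The equivalence then follows at once. We have $d(\Repart q)_p=0$ if and only if $\nabla a(p)=0$, which by \eqref{eq:gradient-identity} holds if and only if every $\partial q/\partial z_j(p)$ vanishes. Because $q$ is holomorphic, $dq_p=\sum_j \frac{\partial q}{\partial z_j}(p)\,dz_j$, so the vanishing of all these partials is the same as $dq_p=0$.

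An alternative, coordinate-free route is to use $d(\Repart q)=\Repart(dq)$ together with the fact that a complex-linear functional $\lambda$ on $\C^n$ vanishes if and only if $\Repart\lambda$ vanishes, since $\operatorname{Im}\lambda(v)=-\Repart\lambda(iv)$. I would include the coordinate computation anyway, because the explicit identity \eqref{eq:gradient-identity} is what is stated. There is no real obstacle here. The only point requiring care is the normalization of the Wirtinger derivative: with $\partial_{z_j}=\tfrac12(\partial_{x_j}-i\partial_{y_j})$, the identity $\partial_{z_j}q=\partial_{x_j}q$ holds for holomorphic $q$, and this is why no stray factor of $2$ appears.
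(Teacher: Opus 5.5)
Your proposal is correct and follows the paper's proof essentially verbatim: the Cauchy--Riemann identity $\partial q/\partial z_j = a_{x_j} - i\,a_{y_j}$ summed over $j$ gives the gradient formula and hence the equivalence. Your coordinate-free alternative via $\operatorname{Im}\lambda(v) = -\Repart\lambda(iv)$ is also the same invariant remark the paper records.
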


\begin{proof}
Write $q=u+iv$.  The Cauchy--Riemann equations in each variable give
\[
   q_{z_j}=u_{x_j}-iu_{y_j},
\]
which proves \eqref{eq:gradient-identity} and hence the equivalence.  One may
also argue invariantly: if $d(\Repart q)_p(v)=0$ for every real tangent vector
$v$, then applying the same identity to $iv$ shows that both the real and the
imaginary parts of $dq_p(v)$ vanish.
\end{proof}

The following proposition is the mapping-theoretic core.  Notice that the
homeomorphism is one of ambient pairs, a fact stronger than the intrinsic
local-homology conclusion used later.

\begin{proposition}
\label{prop:reduction}
Let $F\colon\Omega\subset\C^n\to\C^n$ be pluriharmonic and suppose that $F$
is one-to-one on a neighborhood of $p\in\Omega$.  If $\Jac F(p)=0$, then there
exist a nonzero real linear functional $\ell\colon\C^n\to\R$, a ball
$U\ni p$, and a nonconstant holomorphic function $q$ on $U$ such that
\begin{enumerate}[label=\textup{(\roman*)}]
\item $q(p)=0$ and $dq(p)=0$;
\item $\{\Repart q=0\}=F^{-1}(F(U)\cap H)$ in $U$, where
\[
       H:=\{w:\ell(w)=\ell(F(p))\};
\]
\item the germ of the ambient pair
\[
   \bigl(U,\{\Repart q=0\}\bigr)
\]
at $p$ is homeomorphic to the germ of
$\bigl(\R^{2n},\R^{2n-1}\bigr)$ at the origin.
\end{enumerate}
In particular, $\{\Repart q=0\}$ has the local rational and integral homology
of $\R^{2n-1}$ at $p$.
\end{proposition}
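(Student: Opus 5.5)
Since $\Jac F(p)=0$, the real differential $dF_p\colon\R^{2n}\to\R^{2n}$ is singular, so its image lies in some real hyperplane. We choose a nonzero real linear functional $\ell\colon\C^n\to\R$ vanishing on $\operatorname{im} dF_p$ (equivalently, $\ell\circ dF_p=0$). Then $u:=\ell\circ F-\ell(F(p))$ is a real combination of real and imaginary parts of the components of $F$. Each component is pluriharmonic, so $u$ is a real-valued pluriharmonic function, and by construction $du(p)=0$ and $u(p)=0$. On a small ball $U\ni p$, contained in the neighborhood where $F$ is injective, Lemma~\ref{lem:primitive} gives a holomorphic $q$ with $u=\Repart q$. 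We normalize the imaginary constant so that $q(p)=0$. Lemma~\ref{lem:real-complex} then turns $du(p)=0$ into $dq(p)=0$, which is (i). The germ $q$ is nonconstant: otherwise $u\equiv0$ on $U$, so $F(U)\subset H$. But $F|_U$ is a continuous injection of an open subset of $\R^{2n}$, so by invariance of domain $F(U)$ is open in $\R^{2n}$ and cannot lie in a hyperplane.

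For (ii), the identity $\{\Repart q=0\}=\{z\in U:\ell(F(z))=\ell(F(p))\}=F^{-1}(F(U)\cap H)\cap U$ is immediate from the definitions, since $F|_U$ is injective.

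For (iii) we again use invariance of domain. Shrinking $U$ to a compact ball contained in the injectivity neighborhood, $F|_U\colon U\to V:=F(U)$ is a homeomorphism onto an open set $V\ni F(p)$. It carries $\{\Repart q=0\}\cap U$ onto $V\cap H$. Now $(V,V\cap H)$ near $F(p)$ is, after a real affine change of coordinates sending $F(p)$ to $0$ and $H$ to $\R^{2n-1}\times\{0\}$, the germ of $(\R^{2n},\R^{2n-1})$ at the origin. Composing $F|_U$ with this affine map gives the required homeomorphism of germs of ambient pairs. The final sentence follows because local homology is a topological invariant of the germ: $H_j(\R^{2n-1},\R^{2n-1}\setminus\{0\})$ is $\Z$ (resp.\ $\Q$) in degree $2n-1$ and zero otherwise, by excision.

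None of these steps is deep. The only point needing care is the simultaneous shrinking of $U$: it must be a ball, so that Lemma~\ref{lem:primitive} applies, and it must lie inside the injectivity neighborhood, so that invariance of domain makes $F|_U$ an open embedding. Both properties are inherited by smaller balls, so this causes no trouble. The genuine difficulty, that such a level set cannot exist when $dq(p)=0$, is deferred to Theorem~\ref{thm:phase-criterion}.
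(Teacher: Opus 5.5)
Your proposal is correct and follows essentially the same route as the paper: you take a covector $\ell$ with $\ell\circ dF_p=0$, form the pluriharmonic primitive $q$ of $u=\ell\circ F-\ell(F(p))$, get $dq(p)=0$ from Lemma~\ref{lem:real-complex}, and use invariance of domain both for nonconstancy and for the homeomorphism of pairs onto $(F(U),F(U)\cap H)$. One small slip: in (iii) keep $U$ an \emph{open} ball rather than a compact one, since the image of a compact ball is compact rather than open; invariance of domain on the open ball already makes $F|_U$ an open embedding, which is all you need.
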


\begin{proof}
Shrink to an open ball $U$ on which $F$ is injective.  Since the real linear
map $dF_p\colon\R^{2n}\to\R^{2n}$ is singular, there is a nonzero real
covector $\ell$ such that $\ell\circ dF_p=0$.  Set
\[
       u(z):=\ell(F(z))-\ell(F(p)).
\]
Every real linear combination of the real coordinate functions of $F$ is
real-valued pluriharmonic, so $u$ is pluriharmonic and $du(p)=0$.

The function $u$ is not constant.  Indeed, invariance of domain implies that
$F(U)$ is open in $\R^{2n}$; if $u$ were constant, $F(U)$ would be contained
in the affine hyperplane $H$, which has empty interior.  After shrinking $U$
if needed, Lemma~\ref{lem:primitive} gives a nonconstant holomorphic $q$ with
$u=\Repart q$ and $q(p)=0$.  By Lemma~\ref{lem:real-complex}, $dq(p)=0$.

Again by invariance of domain, $F\colon U\to F(U)$ is a homeomorphism onto an
open set.  Moreover,
\[
   \{z\in U:\Repart q(z)=0\}
      =F^{-1}(F(U)\cap H).
\]
Thus $F$ is a homeomorphism of pairs
\[
 \bigl(U,\{\Repart q=0\}\bigr)
   \longrightarrow
 \bigl(F(U),F(U)\cap H\bigr).
\]
Since $F(U)$ is a neighborhood of $F(p)$ and $H$ is an affine hyperplane
through $F(p)$, the last pair is the standard locally flat hyperplane pair.
The local-homology assertion follows.
\end{proof}
\vskip .10in
\section{Local homology, links, and complementary regions}
\label{sec:local-topology}

We collect the topological input with rational coefficients, which is all that
is needed for the monodromy argument.  For background on local conical
structure and local homology of analytic sets, see
\cite{BochnakCosteRoy1998,BurgheleaVerona1972,
GoreskyMacPherson1988,Hardt1975,Lojasiewicz1964}.

Let $X\subset\R^N$ be a real analytic germ at $0$.  For sufficiently small
admissible $\varepsilon>0$, its link is
\[
       L_\varepsilon(X):=X\cap S_\varepsilon^{N-1}.
\]
The local conical structure theorem gives a homeomorphism from a sufficiently
small representative of $X$ to the cone on $L_\varepsilon(X)$, taking $0$ to
the cone vertex.

\begin{lemma}
\label{lem:local-link}
For every sufficiently small admissible $\varepsilon$ and every integer $j$,
\begin{equation}\label{eq:local-link}
 H_j(X,X\setminus\{0\};\Q)
   \cong \wtH_{j-1}\bigl(L_\varepsilon(X);\Q\bigr).
\end{equation}
Here the left-hand side is computed in a sufficiently small representative of
the germ.
\end{lemma}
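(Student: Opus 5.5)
The plan is to combine the local conical structure theorem with excision and the long exact sequence of the pair.

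\emph{Step 1: a conical neighborhood.} Fix a sufficiently small admissible $\varepsilon$. Put $X_\varepsilon:=X\cap \bar B_\varepsilon$. By the local conical structure theorem there is a homeomorphism
\[
   (X_\varepsilon,0)\cong \bigl(C L_\varepsilon(X),\ \text{vertex}\bigr),
\]
with $L_\varepsilon(X)$ corresponding to the base of the cone. Here $C L$ denotes the closed cone $L\times[0,1]/L\times\{0\}$. For small admissible $\varepsilon$ the links are homeomorphic to one another, indeed they are level sets of the distance function in a collar. Hence the right-hand side of \eqref{eq:local-link} does not depend on $\varepsilon$.

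\emph{Step 2: excision.} Local homology depends only on an open neighborhood of the point. So we may replace $X$ by the open representative $X\cap B_\varepsilon$, or equivalently by the open cone $\mathring C L := L\times[0,1)/L\times\{0\}$. This uses excision of the closed set $X\setminus B_\varepsilon$, which lies in the interior of $X\setminus\{0\}$. It follows that $H_j(X,X\setminus\{0\};\Q)\cong H_j(\mathring C L,\mathring C L\setminus\{v\};\Q)$. This is the precise meaning of ``computed in a sufficiently small representative.''

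\emph{Step 3: the cone computation.} The open cone is contractible, by radial contraction to the vertex. The punctured cone $\mathring C L\setminus\{v\}\cong L\times(0,1)$ deformation retracts onto $L$. The long exact sequence of the pair in reduced homology gives
\[
   \wtH_j(\mathring C L)=0\to H_j(\mathring C L,\mathring C L\setminus\{v\})\xrightarrow{\ \partial\ }\wtH_{j-1}(L\times(0,1))\to \wtH_{j-1}(\mathring C L)=0 .
\]
Hence $\partial$ is an isomorphism, which yields \eqref{eq:local-link} for all $j$.

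The only low-degree subtlety is $j=0$ and $j=1$, including the case $L=\emptyset$. This happens when $0$ is an isolated point of $X$, where reduced homology gives $\wtH_{-1}(\emptyset)=\Q$. The reduced sequence handles these cases uniformly, with the convention $\wtH_{-1}(\emptyset;\Q)=\Q$.

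I expect no real obstacle beyond one point of care. One must invoke the conical structure theorem in a form giving a homeomorphism of pairs that sends the vertex to $0$. One must also note that admissibility means $\varepsilon$ is below the threshold where the sphere $S_\varepsilon$ meets the germ transversally in the stratified sense. Both points are standard and are covered by the cited references on semianalytic sets.
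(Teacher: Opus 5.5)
Your proposal is correct and follows essentially the same route as the paper: local conical structure identifies the pair with $(CL,CL\setminus\{v\})$. The cone is contractible, the punctured cone retracts onto $L_\varepsilon(X)$, and the long exact sequence gives the isomorphism. Your excision step (independence of the representative) and your treatment of an empty link via $\wtH_{-1}(\emptyset;\Q)=\Q$ are details the paper leaves implicit; the latter never arises for the hypersurfaces $\{\Repart q=0\}$ used later.
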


\begin{proof}
By local conical structure, the relevant pair is homeomorphic to
$(CL,CL\setminus\{v\})$, where $L=L_\varepsilon(X)$ and $v$ is the cone
point.  The cone is contractible and its punctured cone deformation retracts
onto $L$.  The long exact sequence of the pair gives
\[
 H_j(CL,CL\setminus\{v\};\Q)
    \cong \wtH_{j-1}(L;\Q).
\]
\end{proof}

\begin{corollary}
\label{cor:homology-link}
If $X$ has at $0$ the local rational homology of $\R^d$, then every
sufficiently small admissible link of $X$ has the rational homology of
$S^{d-1}$.
\end{corollary}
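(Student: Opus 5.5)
This follows directly from Lemma~\ref{lem:local-link}. We apply that lemma with $X$ the given germ, for any sufficiently small admissible $\varepsilon$, and write $L=L_\varepsilon(X)$. By hypothesis, the left-hand side of \eqref{eq:local-link} is $\Q$ when $j=d$ and $0$ otherwise. Reading \eqref{eq:local-link} with the index shifted by one gives
\[
\wtH_{k}(L;\Q)\cong H_{k+1}(X,X\setminus\{0\};\Q)\cong
\begin{cases}\Q,&k=d-1,\\ 0,&k\ne d-1.\end{cases}
\]
These are exactly the reduced rational homology groups of $S^{d-1}$.

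The only point needing care is converting reduced to unreduced homology. When $d-1\ge1$, we get $H_0(L;\Q)\cong\Q$, so $L$ is nonempty and connected. We also get $H_{d-1}(L;\Q)\cong\Q$, and all other groups vanish, which matches $S^{d-1}$. When $d=1$, we get $\wtH_0(L;\Q)\cong\Q$ and all higher groups vanish, so $H_0(L;\Q)\cong\Q^2$, which matches $S^0$. In the degenerate case $d=0$ (not needed later), the hypothesis gives $\wtH_{-1}(L;\Q)\cong\Q$. With the standard convention, this means $L=\emptyset=S^{-1}$. The same convention is implicitly used in Lemma~\ref{lem:local-link} in degree $j=0$.

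We do not expect any real obstacle. The one thing to check is that the representative of $X$ used in the hypothesis can be taken to be the same small conical representative used in Lemma~\ref{lem:local-link}. This holds because local homology at $0$ is independent of the neighborhood, by excision. The conclusion then holds for every admissible $\varepsilon$ below the conical-structure threshold.
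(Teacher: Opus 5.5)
Your proposal is correct and follows the same route as the paper, which simply declares the corollary immediate from Lemma~\ref{lem:local-link}. Your index shift and reduced-to-unreduced bookkeeping (including the $d=1$ and $d=0$ cases and the independence of the representative) just make explicit what the paper leaves implicit.
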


\begin{proof}
This is immediate from Lemma~\ref{lem:local-link}.
\end{proof}

The next lemma is an Alexander-duality statement.  Its conclusion concerns
each complementary component, not merely the homology of their union.

\begin{lemma}
\label{lem:alexander}
Let $L\subset S^m$, $m\ge1$, be a compact locally contractible subset having
the rational homology of $S^{m-1}$.  Then $S^m\setminus L$ has exactly two
connected components, and each component is rationally acyclic.
\end{lemma}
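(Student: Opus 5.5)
We will use Alexander duality in its Čech form. Since $L$ is compact and locally contractible, Čech and singular cohomology of $L$ agree. With rational coefficients, for every $j$,
\[
  \wtH_j(S^m\setminus L;\Q)\cong \check H^{\,m-j-1}(L;\Q)\cong H^{m-j-1}(L;\Q).
\]
Because $L$ is compact and locally contractible, its homology is finitely generated, so the universal coefficient theorem over $\Q$ identifies $H^{k}(L;\Q)$ with $H_k(L;\Q)$. By hypothesis the latter is $\Q$ for $k=0$ and $k=m-1$, and $0$ otherwise. (When $m=1$ the two values of $k$ coincide, giving $H_0(L;\Q)\cong\Q^2$; equivalently $\wtH_0(L;\Q)\cong\Q$.) Reading off the duality:
\begin{itemize}
\item $\wtH_0(S^m\setminus L;\Q)\cong \wtH^{m-1}(L;\Q)\cong\Q$;
\item $\wtH_j(S^m\setminus L;\Q)=0$ for all $j\ge1$.
\end{itemize}
The reduced cohomology of $L$ is used throughout, so that the degree-$0$ bookkeeping is correct also when $m=1$.

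We now pass from the union to each component. The set $S^m\setminus L$ is an open subset of a manifold, hence locally path connected. Its connected components therefore coincide with its path components, and each component is open. Singular homology of a disjoint union of open pieces is the direct sum of the homologies of the pieces. Hence:
\begin{itemize}
\item $H_0(S^m\setminus L;\Q)\cong\Q^{c}$, where $c$ is the number of components, and $\wtH_0\cong\Q^{c-1}$;
\item for $j\ge1$, $H_j(S^m\setminus L;\Q)$ is the direct sum of the groups $H_j(W;\Q)$ over all components $W$.
\end{itemize}
From $\wtH_0\cong\Q$ we get $c-1=1$, so there are exactly two components. The number of components is finite precisely because $\wtH_0$ is finite-dimensional, so no separate finiteness argument is needed. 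From the vanishing of $H_j$ for $j\ge1$ we get that every summand vanishes. Each component $W$ is connected with $H_j(W;\Q)=0$ for $j\ge1$, i.e. rationally acyclic.

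The main point requiring care is the legitimacy of Alexander duality with singular (rather than Čech) cohomology of $L$. This is exactly where local contractibility enters. A compact locally contractible subset of a manifold is a Euclidean neighborhood retract (an ENR), and for ENRs Čech and singular cohomology coincide; this is the form of duality in Dold's or Hatcher's treatment, stated for compact, locally contractible subsets of $S^m$. The remaining steps are formal consequences of the degree-$0$ and higher-degree parts of the duality isomorphism.
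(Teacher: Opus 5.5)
Your proof is correct and follows the paper's argument essentially verbatim: rational Alexander duality $\widetilde H_i(S^m\setminus L;\Q)\cong\widetilde H^{m-i-1}(L;\Q)$, then the universal coefficient theorem to read off $\Q$ in degree $0$ and $0$ in positive degrees, then local path connectedness of the open complement to split homology over its components. Your additional remarks on \v{C}ech versus singular cohomology via the ENR property, and on reduced bookkeeping when $m=1$, spell out points the paper leaves implicit but do not change the route.
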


\begin{proof}
Alexander duality over $\Q$ gives, for every $i$,
\begin{equation}\label{eq:alexander}
  \wtH_i(S^m\setminus L;\Q)
       \cong \wtH^{m-i-1}(L;\Q);
\end{equation}
see \cite{Bredon1993,Hatcher2002}.  The universal coefficient theorem and the
homology assumption show that the right-hand side is $\Q$ for $i=0$ and zero
for $i>0$.  Since the complement is an open subset of a manifold, it is
locally path connected.  Hence $\wtH_0(S^m\setminus L;\Q)\cong\Q$ says that
the complement has exactly two components, say $V_1,V_2$.  For $i>0$,
\[
 H_i(S^m\setminus L;\Q)
    \cong H_i(V_1;\Q)\oplus H_i(V_2;\Q)=0.
\]
Each $V_k$ is connected, so its reduced $H_0$ also vanishes.  Thus both
components are rationally acyclic.
\end{proof}

\begin{remark}\label{rem:no-schoenflies}
No generalized Schoenflies theorem is used.  The link need not be an unknotted
sphere, or even a topological manifold.  Its rational homology and Alexander
duality already give exactly the information required below.
\end{remark}
\vskip .10in
\section{Milnor fibrations and the A'Campo obstruction}
\label{sec:milnor}

Let $q\colon(\C^n,0)\to(\C,0)$ be a nonconstant holomorphic germ.  For all
sufficiently small $\varepsilon>0$, Milnor's theorem gives a smooth locally
trivial fibration
\begin{equation}\label{eq:spherical-milnor}
 \Phi_q:=\frac{q}{|q|}\colon
 S_\varepsilon^{2n-1}\setminus K_q\longrightarrow S^1,
 \quad
 K_q:=q^{-1}(0)\cap S_\varepsilon^{2n-1}.
\end{equation}
This spherical fibration exists for arbitrary holomorphic function germs; an
isolated critical point is not required \cite{Dimca1992,Milnor1968}.  We denote a fiber
by $P_q$ and its geometric monodromy by $h_q\colon P_q\to P_q$.

There is a technical point that is important here.  A'Campo's theorem is often
phrased for the local tube fibration, while \eqref{eq:spherical-milnor} is the
spherical version used in our sign decomposition.  These fibrations have the
same fiber homotopy type and monodromy.  The equivalence in the nonisolated
case follows from the stratified fibration theory of Hamm and L\^e
\cite{Hamm1971,HammLe1973}; see also
\cite[Section~1]{GimenezLeNuno2023} for an explicit modern account.  Thus the
monodromy in \eqref{eq:spherical-milnor} is precisely the local monodromy to
which A'Campo's theorem applies.

The fiber has finite CW type, and its rational homology is finite dimensional.
We use the Lefschetz number
\[
  \Lambda(h_q)
     :=\sum_{j\ge0}(-1)^j
       \operatorname{tr}\bigl((h_q)_*\mid H_j(P_q;\Q)\bigr).
\]

\begin{theorem}[A'Campo]\label{thm:acampo}
Let $q\colon(\C^n,0)\to(\C,0)$ be holomorphic.  If
$q\in\m_{\C^n,0}^{,2}$, equivalently $q(0)=0$ and $dq(0)=0$, then
\begin{equation}\label{eq:acampo-zero}
       \Lambda(h_q)=0.
\end{equation}
\end{theorem}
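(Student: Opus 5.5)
This statement is A'Campo's theorem, cited from \cite{ACampo1973}; the paper invokes it rather than reproving it. The plan is to recall how the proof goes, through an embedded resolution of the hypersurface germ $q^{-1}(0)$.

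\emph{Resolution.} By Hironaka, choose a proper modification $\pi\colon Y\to U$ of a small ball $U\ni0$. It should be an isomorphism off $q^{-1}(0)$, and $q\circ\pi$ should have simple normal crossings. Locally, $q\circ\pi=u\cdot\prod y_k^{m_k}$ with $u$ a unit. Since $dq(0)\ne0$ fails, we may additionally require that $\pi$ factor through the blow-up of the origin. This is where the hypothesis $q\in\m^{2}$ enters.

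\emph{Decomposition of the Milnor fiber.} Work with the tube fibration. By Section~\ref{sec:milnor} it has the same monodromy as the spherical one, so it suffices to treat the tube. Take the Milnor fiber $F_t=q^{-1}(t)\cap B_\varepsilon$ and lift it to $\pi^{-1}(F_t)$, which lies near $E_0:=\pi^{-1}(0)$. For each stratum $E_I^\circ$ of $E_0$ (the points lying on exactly the components $E_i$, $i\in I$), the part of the fiber over $E_I^\circ$ is a fibration over $E_I^\circ$. Its fiber is a disjoint union of $\gcd(m_i)$ copies of a real torus $(S^1)^{|I|-1}$. A'Campo constructs a geometric monodromy $h$ that preserves this decomposition. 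Over the stratum it rotates tori, and on the torus pieces with $|I|\ge2$ it acts by translation, so these contribute Lefschetz number $0$. The pieces over the open strata $E_i^\circ$ (those with $|I|=1$) are $m_i$-sheeted covers of $E_i^\circ$, and $h$ acts on them as a deck transformation of order $m_i$. This action is fixed-point free when $m_i\ge2$.

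\emph{Additivity.} Use additivity of the Lefschetz number over this invariant decomposition, via Mayer--Vietoris with compact supports, or by choosing $h$ to be fixed-point free and applying the Lefschetz fixed point theorem. This gives
\[
\Lambda(h_q)=\sum_{m_i=1}m_i\,\chi(E_i^\circ).
\]
It remains to show that no component of $E_0$ has $m_i=1$. Any component of $\pi^{-1}(0)$ maps to $0$. Because $\pi$ factors through the blow-up of the origin, the pullback of $q$ vanishes to order at least $\operatorname{ord}_0 q\ge2$ along every such component. Hence every $m_i\ge2$, and $\Lambda(h_q)=0$.

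\emph{Main obstacle.} The delicate step is building a geometric monodromy that is compatible with the stratification and fixed-point free on every piece, and then justifying additivity of $\Lambda$ across the non-compact pieces. A'Campo does this carefully with compactly supported cohomology and tubular neighbourhoods. In the present paper this step is simply taken from \cite{ACampo1973}, with \cite{GimenezLeNuno2023} cited for the identification with the spherical monodromy.
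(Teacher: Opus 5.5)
Your proposal agrees with the paper: the paper gives no proof of this statement and simply imports it from A'Campo \cite{ACampo1973}, citing \cite{GimenezLeNuno2023} for the tube/spherical monodromy identification. Your resolution sketch (local normal-crossings model of the monodromy, additivity giving $\Lambda(h_q)=\sum_{m_i=1}\chi(E_i^\circ)$, and $m_i\ge2$ on every component of $\pi^{-1}(0)$) is the standard outline of A'Campo's argument, with two small corrections: factoring $\pi$ through the blow-up of the origin is always possible, so $q\in\m^2$ is used only in the bound $m_i\ge2$; and the decomposition must also cover the parts of the Milnor fiber near components of the total transform not contained in $\pi^{-1}(0)$ (such as the strict transform, where $m$ may equal $1$), whose contribution vanishes by the conic-structure/compact-support step that you, like the paper, leave to A'Campo.
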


The theorem in this generality is due to A'Campo
\cite[Th\'eor\`eme~1 bis]{ACampo1973}.  L\^e proved the stronger fixed-point-free
monodromy statement for a smooth ambient space \cite{Le1975}; the modern
theorem of Gim\'enez Conejero, L\^e, and Nu\~no-Ballesteros applies even to a
complex analytic ambient germ \cite[Theorems~0.1--0.2]{GimenezLeNuno2023}.
These results explicitly allow nonisolated critical loci.

\begin{lemma}
\label{lem:acyclic-lefschetz}
If $P$ is a nonempty connected space of finite homological type and
$\wtH_j(P;\Q)=0$ for all $j\ge0$, then every self-homeomorphism
$h\colon P\to P$ satisfies
\(
       \Lambda(h)=1.
\)
\end{lemma}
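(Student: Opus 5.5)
The plan is a direct computation of the traces in the definition of $\Lambda(h)$, degree by degree. First, the hypothesis of finite homological type guarantees that each $H_j(P;\Q)$ is finite dimensional and that only finitely many of them are nonzero. Hence $\Lambda(h)$ is a well-defined finite alternating sum of traces of the induced linear maps
\[
   h_*\colon H_j(P;\Q)\to H_j(P;\Q).
\]

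Next, I would use the acyclicity hypothesis to identify every one of these groups. For $j\ge1$, reduced and unreduced homology agree, so $H_j(P;\Q)=\wtH_j(P;\Q)=0$, and every trace in positive degree vanishes. In degree $0$, the vanishing of $\wtH_0(P;\Q)$ together with nonemptiness gives $H_0(P;\Q)\cong\Q$, generated by the class $[x]$ of any point $x\in P$. Connectedness alone would also give this, since then $H_0$ is spanned by the class of a single path component.

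It remains to compute the degree-$0$ trace. Because $P$ is path connected (rational acyclicity in degree $0$ means exactly one path component), the classes $[h(x)]$ and $[x]$ coincide in $H_0(P;\Q)$. Thus $h_*$ acts as the identity on $H_0(P;\Q)\cong\Q$, and its trace is $1$. Combining the degrees gives
\[
   \Lambda(h)=(-1)^0\cdot 1+\sum_{j\ge1}(-1)^j\cdot 0=1.
\]

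The only point needing care, and the nearest thing to an obstacle, is that $H_0$ counts path components rather than connected components. I would avoid this issue entirely by reading the hypothesis $\wtH_0(P;\Q)=0$ directly as the statement that there is a single path component. This makes the step $[h(x)]=[x]$ legitimate without any local path-connectedness assumption on $P$. Nothing about $h$ beyond its continuity is used, so the conclusion holds for every continuous self-map, not only for homeomorphisms.
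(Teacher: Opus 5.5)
Your proof is correct and is the paper's argument: only $H_0(P;\Q)\cong\Q$ survives, and $h_*$ acts as the identity there, so $\Lambda(h)=1$. Deriving path-connectedness from $\wtH_0(P;\Q)=0$ is the right justification, and it is slightly more careful than the paper's appeal to connectedness. Your aside that connectedness alone would suffice is false for spaces that are connected but not path connected, but your argument does not rely on it.
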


\begin{proof}
The only nonzero rational homology group is $H_0(P;\Q)\cong\Q$.  Every
self-map of a connected space induces the identity on $H_0$, so its Lefschetz
number is $1$.
\end{proof}

The bridge between phase hypersurfaces and Milnor fibers is the following
elementary observation.

\begin{lemma}
\label{lem:sign-fiber}
For $\theta\in\R$, let
\[
 U_{\theta,+}:=
 \{z\in S_\varepsilon^{2n-1}:\Repart(e^{-i\theta}q(z))>0\}.
\]
If
\[
 I_{\theta,+}:=
 \{e^{it}\in S^1:\theta-\tfrac\pi2<t<\theta+\tfrac\pi2\},
\]
then
\[
       U_{\theta,+}=\Phi_q^{-1}(I_{\theta,+}).
\]
Consequently, $U_{\theta,+}\simeq P_q$.  The analogous statement holds for
the negative sign region.
\end{lemma}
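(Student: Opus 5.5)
The set identity is pointwise. For $z\in S_\varepsilon^{2n-1}$ we have $\Repart(e^{-i\theta}q(z))>0$ only if $q(z)\neq0$, so $U_{\theta,+}$ is contained in $S_\varepsilon^{2n-1}\setminus K_q$. On that set, write $q(z)=|q(z)|e^{it}$. Then
\[
\Repart\bigl(e^{-i\theta}q(z)\bigr)=|q(z)|\cos(t-\theta),
\]
and this is positive exactly when $\cos(t-\theta)>0$. That condition says $e^{it}$ lies in the open half-circle $I_{\theta,+}$, so $U_{\theta,+}=\Phi_q^{-1}(I_{\theta,+})$. The negative region is handled the same way, using the complementary open arc centered at $e^{i(\theta+\pi)}$.

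For the homotopy equivalence, I would use that $\Phi_q$ in \eqref{eq:spherical-milnor} is a locally trivial fibration over $S^1$. The arc $I_{\theta,+}$ is an open interval, hence contractible and paracompact. Its restriction $\Phi_q^{-1}(I_{\theta,+})\to I_{\theta,+}$ is therefore a fiber bundle over a contractible base, and so it is trivial. Concretely, local triviality over overlapping subarcs together with the fact that bundles over an interval are trivial gives a homeomorphism $\Phi_q^{-1}(I_{\theta,+})\cong P_q\times I_{\theta,+}$. Projecting to the first factor yields $U_{\theta,+}\simeq P_q$, where $P_q$ is taken as the fiber over $e^{i\theta}$. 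Since all fibers of the bundle are homeomorphic, the choice of base point does not matter. The same argument applies to $U_{\theta,-}$.

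Nothing in this argument is delicate. The one point worth stating explicitly is the triviality of a locally trivial bundle over an open arc, which follows from the standard homotopy invariance of pullbacks, or from the fibration being smooth, where one can lift the unit angular vector field, with properness unnecessary because one only needs the bundle structure. I would cite the triviality of fiber bundles over contractible paracompact bases and not reprove it.
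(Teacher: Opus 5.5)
Your proposal is correct and follows the paper's proof: the pointwise identity $\Repart(e^{-i\theta}q)=|q|\cos(t-\theta)$ gives $U_{\theta,+}=\Phi_q^{-1}(I_{\theta,+})$, and triviality of the Milnor bundle over the contractible arc $I_{\theta,+}$ gives $U_{\theta,+}\simeq P_q$; you also make explicit the observation, left implicit in the paper, that $U_{\theta,+}$ misses $K_q$. Your aside about lifting the angular vector field is loose, since on a noncompact fiber a lifted flow need not be complete without properness, but the cited triviality of bundles over an interval (or gluing local trivializations at chosen points of the arc) already suffices, so the aside can simply be dropped.
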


\begin{proof}
For $z\notin K_q$,
\[
 \Repart(e^{-i\theta}q(z))>0
\Longleftrightarrow
 \Repart\!\left(e^{-i\theta}\frac{q(z)}{|q(z)|}\right)>0,
\]
which is equivalent to $\Phi_q(z)\in I_{\theta,+}$.  The restriction of the
fiber bundle \eqref{eq:spherical-milnor} to the contractible open semicircle
$I_{\theta,+}$ is fiber-homotopy trivial (indeed smoothly trivial), so its
total space is homotopy equivalent to $P_q$.
\end{proof}
\vskip .10in
\section{The phase-hypersurface criterion}
\label{sec:criterion}

We now prove the singularity theorem announced in the introduction.

\begin{proof}[Proof of Theorem~\ref{thm:phase-criterion}]
The implication (i)$\Rightarrow$(ii) follows from
Lemma~\ref{lem:real-complex} applied to $e^{-i\theta}q$: if $dq(0)\ne0$, then
$d\Repart(e^{-i\theta}q)_0\ne0$ for every $\theta$, and the real-analytic
implicit function theorem gives a smooth hypersurface.  The implication
(ii)$\Rightarrow$(iii) is immediate.

It remains to prove (iii)$\Rightarrow$(i).  Suppose that
\eqref{eq:local-Q-homology} holds for a phase $\theta$ and assume, toward a
contradiction, that $dq(0)=0$.  Multiplication by $e^{-i\theta}$ does not
change criticality or monodromy, so we may replace $q$ by
$e^{-i\theta}q$ and assume $\theta=0$.  Put
\[
       X:=\{\Repart q=0\}.
\]

Choose $\varepsilon>0$ sufficiently small so that the local conical structure
of $X$, the local-homology link description, and the Milnor fibration all hold
on $S_\varepsilon^{2n-1}$.  Such radii form a cofinal set at the origin by the
subanalytic conical structure theorem and Milnor's fibration theorem.  Let
\[
       L:=X\cap S_\varepsilon^{2n-1}.
\]
By Corollary~\ref{cor:homology-link}, $L$ has the rational homology of
$S^{2n-2}$.  The link is compact and semianalytic, hence triangulable and
locally contractible.  Applying Lemma~\ref{lem:alexander} in
$S_\varepsilon^{2n-1}$, we conclude that its complement has exactly two
connected components and that each is rationally acyclic.

The complement is the disjoint union
\[
 S_\varepsilon^{2n-1}\setminus L=U_+\sqcup U_-,
 \quad
 U_\pm:=\{z\in S_\varepsilon^{2n-1}:\pm\Repart q(z)>0\}.
\]
Both sign regions are nonempty.  Indeed, $u=\Repart q$ is a nonconstant real
harmonic function with $u(0)=0$.  If $u\ge0$ on the sphere, the minimum
principle first gives $u\ge0$ in the ball and then, from the interior minimum
$u(0)=0$, forces $u$ to be constant.  The same argument applied to $-u$
excludes $u\le0$ on the sphere.

Because the complement has exactly two components, and $U_+$ and $U_-$ are
disjoint nonempty open subsets whose union is the complement, each $U_\pm$ is
one full component.  Hence
\begin{equation}\label{eq:sign-acyclic}
       \wtH_j(U_+;\Q)=0\quad(j\ge0).
\end{equation}
By Lemma~\ref{lem:sign-fiber}, $U_+\simeq P_q$, so $P_q$ is nonempty, connected,
and rationally acyclic.  Thus Lemma~\ref{lem:acyclic-lefschetz} gives
\[
       \Lambda(h_q)=1.
\]
On the other hand, $q(0)=0$ and $dq(0)=0$, so $q\in\m^2$ and A'Campo's
theorem gives $\Lambda(h_q)=0$.  This contradiction proves $dq(0)\ne0$.
\end{proof}

Several formulations follow immediately.

\begin{corollary}
\label{cor:formulations}
For a nonconstant holomorphic germ $q\colon(\C^n,0)\to(\C,0)$, the following
are equivalent:
\begin{enumerate}[label=\textup{(\roman*)}]
\item $dq(0)\ne0$;
\item $\{\Repart q=0\}$ is a smooth real-analytic hypersurface germ at $0$;
\item $\{\Repart q=0\}$ is a topological $(2n-1)$-manifold germ at $0$;
\item $\{\Repart q=0\}$ is a rational homology $(2n-1)$-manifold at $0$.
\end{enumerate}
If these conditions hold, the hypersurface is locally flat in $\R^{2n}$; if
$dq(0)=0$, it is not even a rational homology manifold at $0$.
\end{corollary}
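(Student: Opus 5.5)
The plan is to close the cycle of implications (i)$\Rightarrow$(ii)$\Rightarrow$(iii)$\Rightarrow$(iv)$\Rightarrow$(i), with almost all the work already done by Theorem~\ref{thm:phase-criterion} at $\theta=0$.

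\emph{(i)$\Rightarrow$(ii).} Since $dq(0)\ne0$, Lemma~\ref{lem:real-complex} gives $d(\Repart q)_0\ne0$. The real-analytic implicit function theorem then makes $\{\Repart q=0\}$ a smooth real-analytic hypersurface germ. This is the case $\theta=0$ of the implication (i)$\Rightarrow$(ii) of Theorem~\ref{thm:phase-criterion}. For the addendum, I would use that same implicit function theorem: it yields a local real-analytic chart of $\R^{2n}$ near $0$ in which the hypersurface becomes a coordinate hyperplane. Hence the germ is locally flat.

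\emph{(ii)$\Rightarrow$(iii).} This is immediate, since a smooth hypersurface germ is a topological $(2n-1)$-manifold germ.

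\emph{(iii)$\Rightarrow$(iv).} If $X=\{\Repart q=0\}$ is a topological $(2n-1)$-manifold near $0$, then $0$ has an open neighborhood $W\subset X$ homeomorphic to $\R^{2n-1}$. Excision gives
\[
H_j(X,X\setminus\{0\};\Q)\cong H_j(W,W\setminus\{0\};\Q)\cong H_j(\R^{2n-1},\R^{2n-1}\setminus\{0\};\Q),
\]
which is \eqref{eq:local-Q-homology}. One small point needs care here. Condition (iv) is read as the local homology at $0$ only, which is exactly condition (iii) of Theorem~\ref{thm:phase-criterion}. If "rational homology manifold" were instead meant at all nearby points, the weaker pointwise version still suffices for the next step.

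\emph{(iv)$\Rightarrow$(i).} This is precisely (iii)$\Rightarrow$(i) of Theorem~\ref{thm:phase-criterion} with $\theta=0$. The final sentence of the corollary is the contrapositive: if $dq(0)=0$, then \eqref{eq:local-Q-homology} fails at $0$.

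I expect no step to be a real obstacle, because the hard monodromy argument is already contained in Theorem~\ref{thm:phase-criterion}. The only place needing attention is matching the meaning of "rational homology manifold at $0$" in (iv) to the pointwise local-homology condition, and that is handled by the excision remark above.
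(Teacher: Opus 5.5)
Your proposal is correct and matches the paper's proof: (i)$\Rightarrow$(ii)$\Rightarrow$(iii)$\Rightarrow$(iv) are routine (your excision argument supplies the detail the paper calls "standard"), (iv)$\Rightarrow$(i) is the case $\theta=0$ of Theorem~\ref{thm:phase-criterion}, and local flatness comes from the implicit function theorem. The final assertion is, as you say, just the contrapositive of (iv)$\Rightarrow$(i).
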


\begin{proof}
The implications
$\textup{(i)}\Rightarrow\textup{(ii)}\Rightarrow
 \textup{(iii)}\Rightarrow\textup{(iv)}$
are standard, while $\textup{(iv)}\Rightarrow\textup{(i)}$ is
\cref{thm:phase-criterion}.  A smooth hypersurface is locally flat by the
implicit function theorem.
\end{proof}

\begin{corollary}
\label{cor:every-phase}
If $dq(0)=0$, then for every real line $L\subset\C$ through $0$, the inverse
image germ $q^{-1}(L)$ fails to be a rational homology $(2n-1)$-manifold at
$0$.
\end{corollary}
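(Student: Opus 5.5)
The plan is to reduce the corollary directly to \cref{thm:phase-criterion}. Any real line $L\subset\C$ through $0$ has the form $L=\{w:\Repart(e^{-i\theta}w)=0\}$ for a suitable $\theta\in\R$. To see this, write $L=\R\cdot e^{i\alpha}$. Then $w\in L$ exactly when $e^{-i\alpha}w$ is real, which holds exactly when $\Repart(e^{-i(\alpha+\pi/2)}w)=0$, so we may take $\theta=\alpha+\pi/2$. It follows that
\[
 q^{-1}(L)=\{z:\Repart(e^{-i\theta}q(z))=0\}=X_\theta(q)
\]
as germs at $0$.

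Next, the germ $q$ is nonconstant; otherwise the hypothesis $q(0)=0$ would make $q\equiv0$, and the statement concerns nonconstant germs as in the standing hypothesis of \cref{thm:phase-criterion}. Suppose some $X_\theta(q)$ were a rational homology $(2n-1)$-manifold at $0$. In particular, its local rational homology at $0$ would be that of $\R^{2n-1}$, which is condition \eqref{eq:local-Q-homology}. This is condition (iii) of \cref{thm:phase-criterion} for this $\theta$, and the theorem then gives $dq(0)\ne0$, contradicting the hypothesis. Hence no $q^{-1}(L)$ is a rational homology manifold at $0$.

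There is essentially no obstacle. The only points needing care are the parametrization of lines by phases and the observation that the rational homology manifold condition at the single point $0$ already includes \eqref{eq:local-Q-homology}. One could alternatively replace $q$ by $e^{-i\theta}q$, which is still critical at $0$, and apply \cref{cor:formulations}(iv)$\Rightarrow$(i) directly to $\{\Repart(e^{-i\theta}q)=0\}$.
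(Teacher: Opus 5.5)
Your proposal is correct and is essentially the paper's own argument: writing each real line through $0$ as $e^{i(\theta+\pi/2)}\R$ identifies $q^{-1}(L)$ with $X_\theta(q)$, and the implication (iii)$\Rightarrow$(i) of Theorem~\ref{thm:phase-criterion} then gives the contradiction. Your remark on nonconstancy is harmless; in fact, if $q\equiv0$ the preimage is all of $\C^n$, whose local homology is that of $\R^{2n}$, so the conclusion holds trivially in that case too.
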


\begin{proof}
Every real line through $0$ is
$e^{i(\theta+\pi/2)}\R$, and its inverse image is
$X_\theta(q)$.  Apply \cref{thm:phase-criterion}.
\end{proof}

\begin{example}[The quadratic model]\label{ex:quadratic}
For
\[
       q(z)=z_1^2+\cdots+z_n^2,
       \quad z=x+iy,
\]
one has $\Repart q=|x|^2-|y|^2$.  Hence the link of
$\{\Repart q=0\}$ on $S_\varepsilon^{2n-1}$ is
\[
 S^{n-1}_{\varepsilon/\sqrt2}\times
 S^{n-1}_{\varepsilon/\sqrt2}.
\]
For $n\ge2$ this has nontrivial middle homology and is not a homology
$S^{2n-2}$; for $n=1$ it consists of four points rather than two.  This model
exhibits concretely the homology detected by the proof.  The general argument
shows that some homological obstruction persists for every critical
holomorphic germ, even when its critical locus is nonisolated or its zero
divisor is nonreduced.
\end{example}
\vskip .10in
\section{The pluriharmonic Lewy theorem and the branch set}
\label{sec:lewy-proof}

\begin{proof}[Proof of Theorem~\ref{thm:lewy}]
The implication (ii)$\Rightarrow$(iii) is the real inverse function theorem;
pluriharmonic mappings are real analytic.  The implication
(iii)$\Rightarrow$(i) is immediate.

For (i)$\Rightarrow$(ii), assume that $F$ is injective near $p$ and suppose
that $\Jac F(p)=0$.  By Proposition~\ref{prop:reduction}, there is a nonconstant
holomorphic $q$ near $p$ such that $dq(p)=0$ while
$\{\Repart q=0\}$ is a locally flat hypersurface germ.  After translating
$p$ to $0$, this contradicts Corollary~\ref{cor:formulations}.  Therefore
$\Jac F(p)\ne0$.
\end{proof}

The natural local noninjectivity set of a continuous map is
\[
 \mathcal B_{\mathrm{inj}}(F):=
 \{p\in\Omega:F\text{ is not injective on any neighborhood of }p\}.
\]
For a general smooth mapping, criticality and failure of local injectivity
need not agree.  They do agree for pluriharmonic mappings.

\begin{theorem}
\label{thm:branch-identity}
For every pluriharmonic map $F\colon\Omega\subset\C^n\to\C^n$,
\begin{equation}\label{eq:branch-identity}
       \mathcal B_{\mathrm{inj}}(F)
       =\Crit(F):=\{p\in\Omega:\Jac F(p)=0\}.
\end{equation}
In particular, the local univalence locus is precisely the regular locus of
$F$.
\end{theorem}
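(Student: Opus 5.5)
The identity \eqref{eq:branch-identity} is a pointwise statement, so I will prove it one point at a time. Both inclusions come from Theorem~\ref{thm:lewy}, applied at each $p\in\Omega$. There is no hidden global issue: each set in \eqref{eq:branch-identity} is defined by a condition at a single point. Membership in $\mathcal B_{\mathrm{inj}}(F)$ depends only on the germ of $F$ at $p$, and membership in $\Crit(F)$ depends only on $dF_p$.

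For the inclusion $\Crit(F)\subset\mathcal B_{\mathrm{inj}}(F)$, I take $p$ with $\Jac F(p)=0$. If $F$ were injective on some neighborhood of $p$, then the implication (i)$\Rightarrow$(ii) of Theorem~\ref{thm:lewy} would give $\Jac F(p)\ne0$, which is a contradiction. So $F$ is injective on no neighborhood of $p$, that is, $p\in\mathcal B_{\mathrm{inj}}(F)$. This is the only substantive direction. It rests entirely on Proposition~\ref{prop:reduction} combined with Theorem~\ref{thm:phase-criterion} through A'Campo's theorem, and all of that is already packaged in Theorem~\ref{thm:lewy}.

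For the reverse inclusion $\mathcal B_{\mathrm{inj}}(F)\subset\Crit(F)$, I argue contrapositively. Suppose $\Jac F(p)\ne0$. Since $F$ is pluriharmonic, it is locally $h+\overline g$ and hence real analytic, in particular $C^1$. The real inverse function theorem, which is the implication (ii)$\Rightarrow$(iii) of Theorem~\ref{thm:lewy}, then gives a neighborhood of $p$ on which $F$ is a diffeomorphism onto its image, and in particular injective. Hence $p\notin\mathcal B_{\mathrm{inj}}(F)$.

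The final sentence of the theorem restates \eqref{eq:branch-identity} through complements. The local univalence locus $\Omega\setminus\mathcal B_{\mathrm{inj}}(F)$ equals $\Omega\setminus\Crit(F)$, which is the set where $\Jac F\ne0$, i.e.\ the regular locus. I do not expect any step to be an obstacle beyond what Theorem~\ref{thm:lewy} already supplies. The only point needing care is that the definition of $\mathcal B_{\mathrm{inj}}$ quantifies over all neighborhoods of $p$, which matches condition (i) of Theorem~\ref{thm:lewy} exactly.
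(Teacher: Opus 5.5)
Your proposal is correct and follows the paper's proof. The inclusion $\Crit(F)\subset\mathcal B_{\mathrm{inj}}(F)$ comes from the implication (i)$\Rightarrow$(ii) of Theorem~\ref{thm:lewy}, and the reverse inclusion is the contrapositive of the real inverse function theorem at points where $\Jac F\ne0$.
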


\begin{proof}
By Theorem~\ref{thm:lewy}, a locally injective point is regular, which gives
$\Crit(F)\subset\mathcal B_{\mathrm{inj}}(F)$.  Conversely, the inverse
function theorem makes $F$ locally injective at every point where
$\Jac F\ne0$.  Taking the contrapositive gives the reverse inclusion.
\end{proof}

Since $F$ is real analytic, so is $\Jac F$.  The identity
\eqref{eq:branch-identity} therefore gives a useful dichotomy.

\begin{corollary}
\label{cor:dichotomy}
Assume that $\Omega$ is connected.  Exactly one of the following occurs:
\begin{enumerate}[label=\textup{(\roman*)}]
\item $\Jac F\equiv0$, and $F$ is locally injective nowhere;
\item $\mathcal B_{\mathrm{inj}}(F)$ is a proper closed real-analytic subset
of $\Omega$ of real dimension at most $2n-1$, hence has empty interior and
$2n$-dimensional Lebesgue measure zero.  On its open dense complement, $F$ is
a local real-analytic diffeomorphism.
\end{enumerate}
\end{corollary}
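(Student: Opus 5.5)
The plan is to write $J:=\Jac F\colon\Omega\to\R$, which is real analytic because pluriharmonic maps are real analytic, and to split according to whether $J$ vanishes identically.

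\emph{Case $J\equiv0$.} Here $\Crit(F)=\Omega$. By \eqref{eq:branch-identity} of Theorem~\ref{thm:branch-identity} this gives $\mathcal B_{\mathrm{inj}}(F)=\Omega$, so $F$ is injective on no neighborhood of any point. This is alternative (i).

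\emph{Case $J\not\equiv0$.} Since $\Omega$ is connected, the identity theorem for real-analytic functions shows that $J$ cannot vanish on any nonempty open set; otherwise it would vanish on all of $\Omega$. Thus $Z:=\{J=0\}$ is a proper closed real-analytic subset of $\Omega$ with empty interior. By Theorem~\ref{thm:branch-identity}, $\mathcal B_{\mathrm{inj}}(F)=Z$. The remaining properties follow from standard facts:
\begin{itemize}
\item The zero set of a real-analytic function that does not vanish identically has dimension at most $2n-1$. One sees this from the stratification or semianalytic dimension theory of \L{}ojasiewicz: a stratum of dimension $2n$ would be open and would force $J\equiv0$ there.
\item A closed set of dimension at most $2n-1$ has Lebesgue measure zero. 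Either it is a countable union of submanifolds of positive codimension, or one argues locally by Fubini, since a real-analytic function that is not identically zero has a null zero set.
\item The complement $\Omega\setminus Z$ is open, and it is dense because $Z$ has empty interior. At each point of this complement, $J\ne0$, so Theorem~\ref{thm:lewy} (or the inverse function theorem directly) makes $F$ a local real-analytic diffeomorphism there.
\end{itemize}
This is alternative (ii).

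The two alternatives are mutually exclusive. In (i), $\mathcal B_{\mathrm{inj}}(F)=\Omega$ is not proper, while in (ii) it is proper.

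There is no serious obstacle, because all the depth is contained in Theorem~\ref{thm:branch-identity}. The only step that needs a citation is the dimension and measure-zero claim for zero sets of real-analytic functions. For this I would rely on the \L{}ojasiewicz structure theorem \cite{Lojasiewicz1964} and not reprove it.
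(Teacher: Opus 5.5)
Your proposal is correct and follows the same route as the paper. You split on whether $\Jac F\equiv0$, apply Theorem~\ref{thm:branch-identity} in each case, and cite standard dimension and measure-zero facts for the zero set of a nontrivial real-analytic function; the paper cites Krantz--Parks where you cite \L{}ojasiewicz, and your remarks on the identity theorem, density of the complement, and mutual exclusivity spell out details the paper leaves implicit.
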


\begin{proof}
If $\Jac F\equiv0$, the first alternative follows from
Theorem~\ref{thm:branch-identity}.  Otherwise the zero set of the nontrivial real
analytic function $\Jac F$ is a proper real-analytic subset of codimension at
least one, and therefore has empty interior and measure zero; see, for example,
\cite{KrantzParks2002}.  Apply Theorem~\ref{thm:branch-identity} again.
\end{proof}

\begin{corollary}
\label{cor:global-diffeo}
If $F\colon\Omega\to\C^n$ is injective and pluriharmonic, then
$\Jac F$ is nowhere zero and $F$ is a real-analytic diffeomorphism from
$\Omega$ onto the open set $F(\Omega)$.
\end{corollary}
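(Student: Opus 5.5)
The plan is to combine the pointwise statement of Theorem~\ref{thm:lewy} with the inverse function theorem and invariance of domain; no new singularity theory is needed.

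\emph{Step 1: nonvanishing of the Jacobian.} Fix $p\in\Omega$. Since $F$ is injective on all of $\Omega$, it is in particular one-to-one on every neighborhood of $p$. Condition (i) of Theorem~\ref{thm:lewy} therefore holds, and the theorem gives $\Jac F(p)\ne0$. Because $p$ was arbitrary, $\Jac F$ vanishes nowhere. Equivalently, $\mathcal B_{\mathrm{inj}}(F)=\emptyset$, and \eqref{eq:branch-identity} gives $\Crit(F)=\emptyset$.

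\emph{Step 2: openness of the image.} By invariance of domain (or directly from the local diffeomorphism property), $F(\Omega)$ is open in $\C^n\cong\R^{2n}$. Thus $F\colon\Omega\to F(\Omega)$ is a continuous bijection onto an open set. It is also an open map, hence a homeomorphism.

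\emph{Step 3: real-analyticity of the inverse.} Pluriharmonic maps are real analytic, since they are locally $h+\overline g$ with $h,g$ holomorphic. Fix $w=F(p)$. The real-analytic inverse function theorem, applicable because $\Jac F(p)\ne0$, gives neighborhoods $V\ni p$ and $W\ni w$ and a real-analytic local inverse $G\colon W\to V$. By global injectivity, $F^{-1}$ coincides with $G$ on $W$. Hence $F^{-1}$ is real analytic near every point of $F(\Omega)$, and $F$ is a real-analytic diffeomorphism onto $F(\Omega)$.

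The only substantive input is Theorem~\ref{thm:lewy}; the remaining steps are routine. The one point that needs care is the agreement of the local inverse with the global inverse in Step 3, and global injectivity supplies it directly.
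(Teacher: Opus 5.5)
Your proposal is correct and follows essentially the same route as the paper: apply Theorem~\ref{thm:lewy} at every point, use invariance of domain to get openness of $F(\Omega)$ and the homeomorphism property, and use global injectivity to identify the real-analytic local inverses from the inverse function theorem with $F^{-1}$. The detour through \eqref{eq:branch-identity} in Step 1 is harmless but redundant.
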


\begin{proof}
Apply Theorem~\ref{thm:lewy} at every point.  Invariance of domain makes $F(\Omega)$
open and $F$ a homeomorphism onto its image.  The local real-analytic inverses
given by the inverse function theorem agree with the global inverse.
\end{proof}

\begin{corollary}
\label{cor:orientation}
If $\Omega$ is connected and
$F\colon\Omega\to F(\Omega)\subset\C^n$ is a pluriharmonic homeomorphism,
then $\Jac F$ has a constant nonzero sign on $\Omega$.
\end{corollary}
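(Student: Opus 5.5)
Since $F$ is a homeomorphism onto its image, it is injective on all of $\Omega$. We therefore plan to deduce the statement from Corollary~\ref{cor:global-diffeo}, which gives that $\Jac F$ vanishes nowhere on $\Omega$. The Jacobian $\Jac F$ is a real-analytic, hence continuous, real-valued function on $\Omega$. The set $\Omega$ is connected and $\Jac F$ has no zeros there. By the intermediate value theorem, applied along paths in the path-connected domain $\Omega$ (or because $\{\Jac F>0\}$ and $\{\Jac F<0\}$ are disjoint open sets covering $\Omega$), exactly one of these sets is all of $\Omega$. This gives the constant sign.

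In order, the steps are as follows. First, observe that a homeomorphism onto its image is in particular injective on $\Omega$. Second, invoke Corollary~\ref{cor:global-diffeo}, or equivalently Theorem~\ref{thm:lewy} at each point, to get $\Jac F(p)\ne0$ for all $p\in\Omega$. Third, use continuity of $\Jac F$ together with connectedness of $\Omega$ to conclude that $\operatorname{sgn}\Jac F$ is a continuous map $\Omega\to\{\pm1\}$, and hence constant.

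There is essentially no obstacle once Theorem~\ref{thm:lewy} is available. All of the depth sits in the nonvanishing of the Jacobian, which has already been established through Theorem~\ref{thm:phase-criterion}. The only point needing a moment's care is that the hypothesis concerns a homeomorphism onto $F(\Omega)$ and not merely a local homeomorphism. Injectivity is immediate from this hypothesis, and Corollary~\ref{cor:global-diffeo} then also shows that $F(\Omega)$ is open and $F$ is a real-analytic diffeomorphism, although only the nonvanishing of $\Jac F$ is needed. We could also remark that the constant sign says $F$ is either orientation preserving or orientation reversing throughout, which matches the local degree being $\pm1$ constantly on connected $\Omega$.
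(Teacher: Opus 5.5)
Your proposal is correct and follows the paper's proof: injectivity from the homeomorphism hypothesis lets you apply Corollary~\ref{cor:global-diffeo}, giving that $\Jac F$ is continuous and nowhere zero. Connectedness of $\Omega$ then forces its sign to be constant.
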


\begin{proof}
By Corollary~\ref{cor:global-diffeo}, $\Jac F$ is continuous and nowhere zero.  Its
sign is constant on the connected domain $\Omega$.
\end{proof}
\vskip .10in
\section{A quantitative application to quasiconformal pluriharmonic maps}
\label{sec:applications}

We now prove Corollary~\ref{cor:bilipschitz-intro}.  Recall that a bounded domain has
$C^1$-Dini boundary if, in finitely many boundary charts, it is represented by
$C^1$ graph functions whose gradients have a common modulus of continuity
$\omega$ satisfying
\[
       \int_0^{r_0}\frac{\omega(t)}{t}\,dt<\infty.
\]
This is the natural regularity at which regularized-distance barriers give
sharp two-sided boundary-distance control; see
\cite{Kalaj2015,Lieberman1985}.

Kalaj's theorem  \cite[Theorem~22]{Kalaj2026} is stated in every real dimension: if
$f\colon B^m\to D\subset\R^m$ is a harmonic quasiconformal homeomorphism onto
a bounded $C^1$-Dini domain and $f$ is a local $C^1$-diffeomorphism in $B^m$,
then $f$ is bi-Lipschitz.  In the pluriharmonic
setting, Theorem~\ref{thm:lewy} removes the local-diffeomorphism hypothesis.

\begin{proof}
View $\B^n$ and $\Omega$ as domains in $\R^{2n}$.  Since $F$ is a
homeomorphism, it is locally injective.  Theorem~\ref{thm:lewy} gives
\[
       \Jac F(z)\ne0\quad(z\in\B^n),
\]
so $F$ is a local real-analytic, hence local $C^1$, diffeomorphism.  Each real
coordinate of a pluriharmonic mapping is harmonic.  Kalaj's theorem therefore
applies with $m=2n$ and yields the bi-Lipschitz conclusion.
\end{proof}

\begin{remark}
\label{rem:distortion}
For general harmonic quasiconformal mappings in $\R^m$, pointwise Jacobian
nondegeneracy is not automatic.  A known sufficient condition is the
small-distortion bound $K_O<3^{m-1}$
\cite{BozinMateljevic2015}; see also \cite{AstalaManojlovic2015} for related
higher-dimensional extensions of Pavlovi\'c-type regularity.  In the
pluriharmonic class, Theorem~\ref{thm:lewy} supplies nondegeneracy with no smallness
condition on the quasiconformal distortion.  Thus
Corollary~\ref{cor:bilipschitz-intro} extends the complex two-dimensional conclusion
in \cite[Corollary~24]{Kalaj2026} to every $n\ge1$.
\end{remark}
\vskip .10in
\section{Scope of the monodromy argument}
\label{sec:scope}

We finish by recording why the argument is dimension-free and where its
hypotheses enter.

For an isolated hypersurface singularity, the Milnor fiber is homotopy
equivalent to a bouquet of middle-dimensional spheres, and one could reach a
contradiction through the Milnor number.  The scalar germ produced by
Proposition~\ref{prop:reduction}, however, has no reason to possess an isolated critical
point.  A'Campo's theorem is exactly suited to this situation: membership in
$\m^2$, rather than isolatedness, forces the monodromy Lefschetz number to
vanish.

If $q$ has repeated factors, its Milnor fiber may be disconnected and the
binding may be singular.  The proof does not separate reduced and nonreduced
cases.  Under the hypothetical homology-manifold condition, Alexander duality
forces each sign region to be connected and rationally acyclic; the sign-region
bundle then forces the same properties on the Milnor fiber.  This is precisely
what contradicts A'Campo's theorem.

Under the local homology hypothesis, the link
\[
 L=\{\Repart q=0\}\cap S^{2n-1}_\varepsilon
\]
is a rational homology $S^{2n-2}$.  Alexander duality turns its complement
into a homological $S^0$: two rationally acyclic components.  A sign component
is the Milnor fibration over a contractible semicircle.  After that passage,
all link geometry is compressed into the scalar contradiction
\[
        1=\Lambda(h_q)=0.
\]
The left equality is topological; the right equality is holomorphic.

The result proves the local equivalence between univalence and Jacobian
nondegeneracy for equidimensional pluriharmonic maps, together with the stated
global and quasiconformal consequences.  It does not assert a Lewy theorem for
arbitrary harmonic maps, which is false in real dimensions at least three, and
it does not address the separate boundary-extension question in
Hengartner's Problem~3.10(b).

\medskip
\noindent\textbf{Acknowledgements.}
D. Zhong was partially supported by the \textit{Guangdong Basic and Applied Basic Research Foundation} under Grant nos. 2022A1515110967 and 2023A1515011809 of the P. R. China.
Z.-G. Wang was partially supported by the Key Project of the Education
Department of Hunan Province under Grant no.~25A0668 and by the Natural
Science Foundation of Changsha under Grant no.~kq2502003.  

\medskip
\noindent\textbf{Author contributions.}
Both authors contributed equally to this work.

\medskip
\noindent\textbf{Conflict of interest.}
The authors declare no conflict of interest.

\medskip
\noindent\textbf{Data availability.}
No datasets were generated or analyzed in this work.

\end{document}